\newcommand\blfootnote[1]{%
  \begingroup
  \renewcommand\thefootnote{}\footnote{#1}%
  \addtocounter{footnote}{-1}%
  \endgroup
}
\tikzset{dot/.style={draw,shape=circle,fill=black,scale=0.4}}
\newcommand{\stwo}{
\draw
    (0,0.5-0.1) to [out=0,in=180]
    (1,0.5) to [out=0,in=90]
    (2,0) to [out=270,in=0]
    (1,-0.5) to [out=180,in=0]
    (0,-0.5+0.1) to [out=180,in=0]
    (-1,-0.5) to [out=180,in=270]
    (-2,0) to [out=90,in=180]
    (-1,0.5) to [out=0,in=180]
    (0,0.5-0.1);
\draw (-1.5,0) to [out=-30,in=180+30] (-0.5,0);
\draw (-1.3,-0.1) to [out=30,in=180-30] (-0.7,-0.1);
\draw (0.5,0) to [out=-30,in=180+30] (1.5,0);
\draw (0.7,-0.1) to [out=30,in=180-30] (1.3,-0.1);
}
\newcommand{\tripod}[3]{
    \draw #1 node [dot] {} -- #2 node [dot] {} -- #3 node [dot] {} -- cycle;
    \draw [red] ($#1!0.5!#2$) .. controls ($0.33*#1+0.33*#2+0.33*#3$) .. ($#2!0.5!#3$);
    \draw [red] ($#2!0.5!#3$) .. controls ($0.33*#1+0.33*#2+0.33*#3$) .. ($#3!0.5!#1$);
    \draw [red] ($#3!0.5!#1$) .. controls ($0.33*#1+0.33*#2+0.33*#3$) .. ($#1!0.5!#2$);
}
\newcommand{\bipod}[3]{
    \draw #1 node [dot] {} -- #2 node [dot] {} -- #3 node [dot] {} -- cycle;
    \draw [red] ($#1!0.5!#2$) .. controls ($0.33*#1+0.33*#2+0.33*#3$) .. ($#2!0.5!#3$);
    \draw [red] ($#2!0.5!#3$) .. controls ($0.33*#1+0.33*#2+0.33*#3$) .. ($#3!0.5!#1$);
}
\algnewcommand{\IIf}[1]{\State\algorithmicif\ #1\ \algorithmicthen}
\algnewcommand{\EElse}{\algorithmicelse\ }
\algnewcommand{\EndIIf}{\unskip\ }
\algnewcommand{\algorithmicforeach}{\textbf{for each}}
\definecolor{dkgreen}{rgb}{0,0.6,0}
\definecolor{gray}{rgb}{0.5,0.5,0.5}
\definecolor{mauve}{rgb}{0.58,0,0.82}
\tiny\color{gray},
\title{Experimental statistics for Mirzakhani's Theorem}
\author{Mark C. Bell}
\begin{document}

\maketitle

\begin{abstract}
In her seminal 2008 paper, Maryam Mirzakhani showed that the ratio that two topological types of curves occur in is a rational number.

In this paper we describe the process by which we obtained experimental evidence that separating and non-separating curves on the surface of genus two occur in the ratio $1 : 48$.
\blfootnote{\href{https://markcbell.github.io}{\url{markcbell.github.io}}}
\end{abstract}

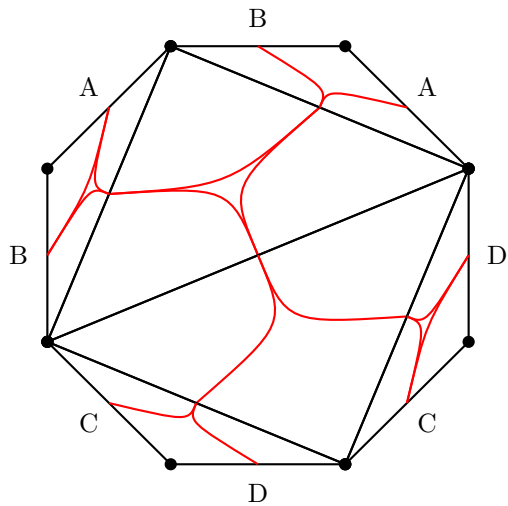
\begin{figure}[htb]
\centering
\begin{tikzpicture}[scale=3, thick]

\begin{scope}[rotate=22.5]
\tripod{(0:1)}{(90:1)}{(180:1)}
\bipod{(45:1)}{(0:1)}{(90:1)}
\tripod{(135:1)}{(90:1)}{(180:1)}
\bipod{(270:1)}{(180:1)}{(0:1)}
\bipod{(225:1)}{(180:1)}{(270:1)}
\tripod{(270:1)}{(315:1)}{(0:1)}
\end{scope}

\foreach \x [count=\i] in {A,B,A,B,C,D,C,D} \node at (45*\i:1.05) {\x};

\end{tikzpicture}
\caption{The train track $\tau_{35}$ on the \texttt{curver} triangulation of $S_{2,1}$.}
\label{fig:train_track}
\end{figure}

\begin{acknowledgements}
The author is grateful to Saul Schleimer for introducing this problem to him and to Vincent Delecroix, Jenya Sapir and Anton Zorich for their many helpful discussions, comments and feedback.

This work used the Extreme Science and Engineering Discovery Environment (XSEDE), which is supported by National Science Foundation (grant number ACI-1548562)~\cite{xsede}.
The author is grateful for the use of
    SuperMIC at Louisiana State University Center for Computation and Technology;
    Comet and Oasis at the San Diego Supercomputer Center; and
    Bridges and Bridges Pylon at the Pittsburgh Supercomputing Center
provided through allocation TG-DMS180008.
\end{acknowledgements}

\section{Introduction}

In her seminal 2008 paper, Maryam Mirzakhani studied the growth rate of simple closed curves on a surface with respect to their length in a hyperbolic metric~\cite{Mirzakhani}.
Her techniques were sufficiently delicate to also extract growth rates for the multicurves of a specific topological type, where the \emph{topological type} of a multicurve $\calL$ is its mapping class group orbit $[\calL] \defeq \Mod(S) \cdot \{\calL\}$.
Importantly, this allowed her to show that the ratio of two of these growth rates is in fact independent of the choice of hyperbolic metric:

\begin{theorem}[{\cite[Corollary~1.4]{Mirzakhani}}]
\label{thrm:mirzakhani}
Let $m$ and $m'$ be topological types of multicurves on a surface $S$.
Choose a hyperbolic metric $X$ on $S$ and let
\[ s_X(L, m) \defeq \{ \textrm{multicurve $\calL$} : \ell_X(\calL) \leq L \inlineand [\calL] = m \}. \]
Then
\[ \lim_{L \to \infty} \frac{|s_X(L, m)|}{|s_X(L, m')|} \]
exists and is a rational number $K(m, m')$ that depends only on $m$ and $m'$ and is independent of the choice of $X$.
\end{theorem}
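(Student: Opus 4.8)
The plan is to realise $|s_X(L,m)|$ as a lattice-point count in the space $\mathcal{ML}(S)$ of measured laminations, to extract a precise $L\to\infty$ asymptotic for it, and to read off from the shape of that asymptotic both the existence of the limit and its rationality. Fix once and for all a multicurve $\gamma$ with $[\gamma]=m$, and write $B_X(L)\defeq\{\lambda\in\mathcal{ML}(S):\ell_X(\lambda)\le L\}$; then $s_X(L,m)$ is in bijection with $\big(\Mod(S)\cdot\gamma\big)\cap B_X(L)$. The space $\mathcal{ML}(S)$ is a piecewise-linear manifold of dimension $d\defeq 6g-6+2n$, and the length pairing $\ell_X$ is continuous, positively homogeneous and proper on it, so each $B_X(L)$ is compact and equals $L\cdot B_X(1)$. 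Thurston's measure $\mu_{\mathrm{Th}}$ on $\mathcal{ML}(S)$ is a locally finite, $\Mod(S)$-invariant measure, homogeneous of degree $d$ under scaling; in particular $B(X)\defeq\mu_{\mathrm{Th}}(B_X(1))$ is finite and positive and $\mu_{\mathrm{Th}}(B_X(L))=B(X)L^{d}$. The point to keep in sight is that $B(X)$ is the \emph{only} place the metric $X$ will enter.

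The heart of the matter is the asymptotic
\[ \lim_{L\to\infty}\frac{|s_X(L,m)|}{L^{d}}=c(m)\,B(X),\qquad c(m)>0\ \text{depending only on }m. \]
I would obtain it by studying the normalised counting measures $\nu_L\defeq L^{-d}\sum_{\alpha\in\Mod(S)\cdot\gamma}\delta_{\alpha/L}$ on $\mathcal{ML}(S)$, which do not involve $X$ at all, and showing $\nu_L\rightharpoonup c(m)\,\mu_{\mathrm{Th}}$ vaguely as $L\to\infty$; since $|s_X(L,m)|/L^{d}=\nu_L(B_X(1))$ and $B_X(1)$ is a compact $\mu_{\mathrm{Th}}$-continuity set, evaluating the limit on it yields the displayed formula. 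That any vague limit of the $\nu_L$ is a scalar multiple of $\mu_{\mathrm{Th}}$ is forced by its $\Mod(S)$-invariance together with Masur's ergodicity theorem for the $\Mod(S)$-action on $(\mathcal{ML}(S),\mu_{\mathrm{Th}})$; the substantive work — following Mirzakhani, via the ergodicity of the earthquake flow and the integration formula — is to prove that the limit exists with no escape of mass and to pin down the constant. For the latter I would integrate over moduli space: Mirzakhani's integration formula rewrites $\int_{\mathcal{M}_{g,n}}|s_X(L,\gamma)|\,dX_{\mathrm{WP}}$, after unfolding the orbit sum and cutting $S$ along $\gamma$ into subsurfaces $S_{g_1,n_1},\dots,S_{g_r,n_r}$, as a fixed combinatorial multiple (from the symmetries of $\gamma$) of
\[ \int_{\{\,x\in\mathbb{R}_{>0}^{k}\,:\,c_1x_1+\dots+c_kx_k\le L\,\}}\ \prod_{i=1}^{r}V_{g_i,n_i}\big(x^{(i)}\big)\ x_1\cdots x_k\,dx_1\cdots dx_k, \]
where $k$ is the number of components of $\gamma$ (with weights $c_1,\dots,c_k$), the tuple $x^{(i)}$ lists the lengths attached to the boundary components of the $i$-th piece arising from $\gamma$, and $V_{g_i,n_i}$ is the corresponding Weil--Petersson volume of moduli of bordered surfaces. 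Integrating the vague convergence of $\nu_L$ over $\mathcal{M}_{g,n}$ and comparing leading terms in $L$ identifies $c(m)\cdot b_{g,n}$, where $b_{g,n}\defeq\int_{\mathcal{M}_{g,n}}B(X)\,dX_{\mathrm{WP}}\in(0,\infty)$, with the top-degree coefficient of the displayed polynomial in $L$.

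Granting all this, the theorem is immediate. Dividing the asymptotics for $m$ and $m'$,
\[ \lim_{L\to\infty}\frac{|s_X(L,m)|}{|s_X(L,m')|}=\frac{c(m)}{c(m')}\,, \]
which we call $K(m,m')$; the factor $B(X)$ has cancelled, so this depends only on $m$ and $m'$ — independence of the hyperbolic metric is automatic. For rationality I would examine the top-degree coefficient $q_m\defeq c(m)\,b_{g,n}$ of the polynomial above. Each $V_{g_i,n_i}$ is a polynomial in the squared boundary lengths and in $\pi^{2}$ with rational coefficients, and isolating the highest power of $L$ in the displayed integral picks out precisely the $\pi$-free top-degree parts of the $V_{g_i,n_i}$ — the $\psi$-class intersection numbers, which are rational — integrated against monomials over an $L$-scaled simplex, an operation introducing only rational factors. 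Hence $q_m\in\mathbb{Q}$, and therefore
\[ K(m,m')=\frac{c(m)}{c(m')}=\frac{q_m}{q_{m'}}\in\mathbb{Q}, \]
the common factor $b_{g,n}$ dropping out, so that one never has to evaluate it. The genuine obstacle is the vague convergence $\nu_L\rightharpoonup c(m)\,\mu_{\mathrm{Th}}$ together with the evaluation of $c(m)$ through the integration formula; by contrast the cancellation giving $X$-independence and the bookkeeping giving rationality are comparatively routine.
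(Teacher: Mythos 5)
Your outline is correct and follows essentially the same route the paper relies on: the paper does not prove this statement but imports it as Mirzakhani's Corollary 1.4, and the strategy it attributes to her---linking $|s_X(L,m)|$ to Weil-Petersson volume polynomials of moduli of bordered surfaces and extracting rationality of the ratio from their top-degree, $\pi$-free (intersection-number) coefficients after the metric-dependent factor cancels---is exactly what you sketch. The one ingredient you leave as a black box, the vague convergence of the rescaled orbit-counting measures to a multiple of the Thurston measure with no escape of mass, is indeed the substantive core of Mirzakhani's argument, and you correctly identify it as such.
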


To prove this, Mirzakhani linked $s_X(L, m)$ to the Weil-Petersson volume of moduli spaces of bordered Riemann surfaces.
She had previously shown the volume of these spaces to be a polynomial function (with coefficients in $\QQ(\pi)$) of their boundary lengths~\cite[Section~5]{MirzakhaniVolume}.
Since these polynomials can be explicitly computed, her process allows a formula for $K(m, m')$ to be obtained.
Within her paper she provided several worked examples of this technique, including computing the distribution of topological types of curves on the (closed) surface of genus two $S_2$~\cite[Section~6]{Mirzakhani}.

Variants of this result were subsequently obtained by others, for example, see the work of Erlandsson--Souto~\cite{ErlandssonSouto}, Rafi--Souto~\cite{RafiSouto} and Rivin~\cite{Rivin},
In particular, Delecroix--Goujard--Zograf--Zorich recently obtained analogous distributions for closed geodesics on square tiled surfaces~\cite{DelecroixGoujardZografZorich}.
However, when using their technique to reproduce Mirzakhani's worked example for the distribution of topological types on $S_2$, they identified two misprints in her example~\cite[Remark~4.15]{DelecroixGoujardZografZorich}.
Together, these appear to affect the $1 : 6$ ratio (and so the probability of separating being $\frac{1}{7}$) calculated by Mirzakhani by a factor of eight (and making the probability of separating $\frac{1}{49}$).

To help rule out the possibility of any other misprints, experimental evidence for this distribution has been obtained by sampling the topological types of a large number of multicurves on $S_2$.
This gives direct evidence that for curves on $S_2$
\[ K(\textrm{non-separating}, \textrm{separating}) = 48 \]
or equivalently that, on a surface of genus two, a long, random, connected, simple, closed geodesic is separating with probability $\frac{1}{49}$.

This paper documents the tools and methods used in this experiment.

\subsection{Experiment}

To obtain an estimate for the distribution of separating and non-separating curves on $S_2$ the following process was used.
See Appendix~\ref{sec:curver} for an example of the code used to achieve this.
\begin{enumerate}
\item Let $\tau_{35}$ be the train track on $S_2$ shown in Figure~\ref{fig:train_track}.
This is shown on the \texttt{curver} triangulation of $S_{2, 1}$ and is named following the binary representation of the normal arcs present in it.
\item A sample of \num{2404171} multicurves was taken with each multicurve being drawn uniformly at random from the set of multicurves that are fully carried by $\tau_{35}$ and with (combinatorial) length at most $L \defeq \num{1000000}$ via the method described in Section~\ref{sec:sample}.
\item Using the techniques of Section~\ref{sec:properties}, the topological type of each of these multicurves was recorded via their canonical name.
\end{enumerate}

The distribution of the \num{1463775} of those that are \emph{primitive}, that is, whose components all lie in distinct isotopy classes, are documented in Table~\ref{tab:S_2_distribution}.
This should be compared to the distribution computed by Delecroix--Goujard--Zograf--Zorich~\cite[Table~1]{DelecroixGoujardZografZorich}, which is also available in Table~\ref{tab:expected_S_2_distribution}.
Of these, we see that \num{613196} are curves (that is, have a single component) --- consisting of \num{12496} that are separating and \num{600700} that are non-separating.
This means that separating and non-separating curves were drawn in a ratio of approximately $1 : 48.07$.

\begin{table}[htb]
\centering
\begin{tabular}{ccc}
Topological Type & Count & Percentage \\ \hline

\adjustbox{margin=4pt}{\begin{tikzpicture}[scale=0.7,thick,baseline=(current bounding box.center)]
\draw [red, dotted] (-1,-0.15) to [out=180,in=180] (-1,-0.5);
\draw [red, dotted] (1,-0.15) to [out=180,in=180] (1,-0.5);
\stwo{}
\draw [red] (-1,-0.15) to [out=0,in=0] (-1,-0.5);
\draw [red] (1,-0.15) to [out=0,in=0] (1,-0.5);
\node at (0, -1) {([0], [\{1,1\}])};
\end{tikzpicture}}
& \num{720649} & 49.23\% \\

\adjustbox{margin=4pt}{\begin{tikzpicture}[scale=0.7,thick,baseline=(current bounding box.center)]
\draw [red, dotted] (-1,-0.15) to [out=180,in=180] (-1,-0.5);
\stwo{}
\draw [red] (-1,-0.15) to [out=0,in=0] (-1,-0.5);
\node at (0, -1) {([1], [\{1\}])};
\end{tikzpicture}}
& \num{600700} & 41.04\% \\

\adjustbox{margin=4pt}{\begin{tikzpicture}[scale=0.7,thick,baseline=(current bounding box.center)]
\draw [red, dotted] (-1,-0.15) to [out=180,in=180] (-1,-0.5);
\draw [red, dotted] (1,-0.15) to [out=180,in=180] (1,-0.5);
\draw [red, dotted] (0,0.4) to [out=180,in=180] (0,-0.4);
\stwo{}
\draw [red] (-1,-0.15) to [out=0,in=0] (-1,-0.5);
\draw [red] (1,-0.15) to [out=0,in=0] (1,-0.5);
\draw [red] (0,0.4) to [out=0,in=0] (0,-0.4);
\node at (0, -1) {([0, 0], [\{1\}, \{1\}, \{1\}])};
\end{tikzpicture}}
& \num{59921} & 4.09\% \\

\adjustbox{margin=4pt}{\begin{tikzpicture}[scale=0.7,thick,baseline=(current bounding box.center)]
\draw [red, dotted] (-1,-0.15) to [out=180,in=180] (-1,-0.5);
\draw [red, dotted] (1,-0.15) to [out=180,in=180] (1,-0.5);
\draw [red, dotted] (-0.7,-0.1) to [out=-30,in=210] (0.7,-0.1);
\stwo{}
\draw [red] (-1,-0.15) to [out=0,in=0] (-1,-0.5);
\draw [red] (1,-0.15) to [out=0,in=0] (1,-0.5);
\draw [red] (-0.7,-0.1) to [out=45,in=135] (0.7,-0.1);
\node at (0, -1) {([0, 0], [\{\}, \{1,1,1\}, \{\}])};
\end{tikzpicture}}
& \num{40053} & 2.74\% \\

\adjustbox{margin=4pt}{\begin{tikzpicture}[scale=0.7,thick,baseline=(current bounding box.center)]
\draw [red, dotted] (-1,-0.15) to [out=180,in=180] (-1,-0.5);
\draw [red, dotted] (0,0.4) to [out=180,in=180] (0,-0.4);
\stwo{}
\draw [red] (-1,-0.15) to [out=0,in=0] (-1,-0.5);
\draw [red] (0,0.4) to [out=0,in=0] (0,-0.4);
\node at (0, -1) {([0, 1], [\{1\}, \{1\}, \{\}])};
\end{tikzpicture}}
& \num{29956} & 2.04\% \\

\adjustbox{margin=4pt}{\begin{tikzpicture}[scale=0.7,thick,baseline=(current bounding box.center)]
\draw [red, dotted] (0,0.4) to [out=180,in=180] (0,-0.4);
\stwo{}
\draw [red] (0,0.4) to [out=0,in=0] (0,-0.4);
\node at (0, -1) {([1, 1], [\{\}, \{1\}, \{\}])};
\end{tikzpicture}}
& \num{12496} & 0.85\%
\end{tabular}
\caption{Sampled distribution of primitive multicurves on $S_{2}$.}
\label{tab:S_2_distribution}
\end{table}

\begin{table}[htb]
\centering
\begin{tabular}{ccc}
Topological Type & Expected Fraction & Expected Percentage \\ \hline
$([0], [\{1,1\}])$ & 288 / 585 & 49.23\% \\
$([1], [\{1\}])$ & 240 / 585 & 41.02\% \\
$([0, 0], [\{1\}, \{1\}, \{1\}])$ & 24 / 585 & 4.10\% \\
$([0, 0], [\{\}, \{1,1,1\}, \{\}])$ & 16 / 585 & 2.74\% \\
$([0, 1], [\{1\}, \{1\}, \{\}])$ & 12 / 585 & 2.05\% \\
$([1, 1], [\{\}, \{1\}, \{\}])$ & 5 / 585 & 0.85\%
\end{tabular}
\caption{Distribution of primitive multicurves on $S_{2}$ derived from the calculations of Delecroix--Goujard--Zograf--Zorich~\cite[Table~1]{DelecroixGoujardZografZorich}.}
\label{tab:expected_S_2_distribution}
\end{table}

\subsection{Preliminaries}

For ease of notations, we will assume throughout that:
\begin{itemize}
\item all multicurves are essential,
\item all polytopes are integral.
\end{itemize}
As usual, we define the \emph{complexity} of:
\begin{itemize}
\item an integer $n$ to be $||n|| \defeq \log(|n|)$,
\item a vector $v$ to be $||v|| \defeq \sum_i ||v[i]||$,
\item a matrix $M$ to be $||M|| \defeq \sum_{i,j} ||M[i, j]||$,
\item a polytope $P = \{v : A \cdot v \geq b\}$ to be $||P|| \defeq ||A|| + ||b||$,
\end{itemize}

\section{Sampling multicurves}
\label{sec:sample}

Inspired by the work of Dunfield--Thurston~\cite{DunfieldThurston}, in this section we describe how to uniformly sample multicurves from a train track.
We use these since they give us a coordinate system that we can use to describe many multicurves with.

\begin{definition}
Suppose that $\tau$ is a train track.
A multicurve $\calL$ is \emph{fully carried by $\tau$} if it is carried by $\tau$ and assigns positive measure to every branch of $\tau$.
We use $s_\tau$ to denote the set of multicurves that are fully carried by $\tau$.
\end{definition}

A train track also gives us a combinatorial notion of length $\ell_\tau(\cdot)$ which records the total weight assigned to the branches of $\tau$ by a fully carried multicurve.

\begin{proposition}
Let $m$ and $m'$ be topological types of multicurves on a surface $S$.
Choose a train track $\tau$ on $S$ that fully carries an open subset of $\ML(S)$ and let
\[ s_\tau(L, m) \defeq \{\calL \in s_\tau : \ell_\tau(\calL) \leq L \inlineand [\calL] = m\}. \]
Then
\[ \lim_{L \to \infty} \frac{|s_\tau(L, m)|}{|s_\tau(L, m')|} = K(m, m') \]
where $K(m, m')$ is the constant of Theorem~\ref{thrm:mirzakhani}. \qed
\end{proposition}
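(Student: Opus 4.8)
The plan is to recognise $s_\tau(L,m)$ as a lattice-point count inside a polytope and then to feed that count into the equidistribution that underlies Mirzakhani's theorem: a mapping class group orbit of integral multicurves, suitably rescaled, equidistributes towards the Thurston measure $\mu_{\mathrm{Th}}$ on $\ML(S)$. Write $U\subseteq\ML(S)$ for the set of measured laminations fully carried by $\tau$ (non-empty and open by hypothesis), and let $B$ be the set of branches of $\tau$. The weight coordinates present the closure $\overline{U}$ as the closed polyhedral cone cut out inside $\mathbb{R}_{\ge 0}^{B}$ by the switch conditions of $\tau$ --- a $\dim_{\mathbb{R}}\ML(S)$-dimensional, pointed, rational cone --- with $U$ its relative interior; the multicurves fully carried by $\tau$ are then precisely the integral points of $\ML(S)$ lying in $U$, and the combinatorial length $\ell_\tau=\sum_{b\in B}w_b$ is the restriction of a linear functional. (The integral structure of this chart agrees with the Thurston integral structure up to a global index that does not depend on the topological type, hence cancels in any ratio.)

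Put $d\defeq\dim_{\mathbb{R}}\ML(S)$ and consider $B_\tau\defeq\{\lambda\in\overline{U}:\ell_\tau(\lambda)\le 1\}$. This set is compact: $\overline{U}$ is pointed and $\ell_\tau$ is strictly positive on $\overline{U}\setminus\{0\}$, because the total branch weight vanishes only on the zero lamination, so $\ell_\tau$ is proper on $\overline{U}$. Its topological boundary in $\ML(S)$ is contained in the boundary of the cone $\overline{U}$ together with the wall $\{\ell_\tau=1\}$, hence in a finite union of rational affine subspaces of positive codimension, and is therefore $\mu_{\mathrm{Th}}$-null, since $\mu_{\mathrm{Th}}$ is a Lebesgue-class measure in every chart. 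Finally, $|s_\tau(L,m)|$ counts the integral multicurves of type $m$ in the dilate $L\cdot B_\tau$ up to an $O(L^{d-1})$ error, coming from the difference between ``carried'' and ``fully carried'' (integral points on $\partial\overline{U}$, which lie in proper subtracks) and from the strict versus non-strict length inequality; this is negligible at leading order.

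Now invoke the equidistribution theorem underpinning Mirzakhani's count --- due to Mirzakhani~\cite{Mirzakhani}, see also Erlandsson--Souto~\cite{ErlandssonSouto} and Rafi--Souto~\cite{RafiSouto} --- namely that for a topological type $m$ the rescaled orbit counting measures $\frac{1}{L^{d}}\sum_{[\calL]=m}\delta_{\calL/L}$, the sum ranging over integral multicurves $\calL$, converge weakly on $\ML(S)$ to $c(m)\,\mu_{\mathrm{Th}}$ for some constant $c(m)>0$ that depends only on $m$. Since $B_\tau$ is compact with $\mu_{\mathrm{Th}}$-null boundary, evaluating this convergence on $B_\tau$ gives $|s_\tau(L,m)|\sim c(m)\,\mu_{\mathrm{Th}}(B_\tau)\,L^{d}$; evaluating it instead on the hyperbolic unit ball $\{\lambda:\ell_X(\lambda)\le 1\}$ --- also compact with $\mu_{\mathrm{Th}}$-null boundary --- gives $|s_X(L,m)|\sim c(m)\,\mu_{\mathrm{Th}}(\{\ell_X\le 1\})\,L^{d}$, so that $K(m,m')=c(m)/c(m')$ by Theorem~\ref{thrm:mirzakhani}. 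As $\mu_{\mathrm{Th}}(B_\tau)>0$, the denominator $|s_\tau(L,m')|$ is positive for all large $L$, and dividing the two asymptotics over $\tau$ gives
\[ \lim_{L\to\infty}\frac{|s_\tau(L,m)|}{|s_\tau(L,m')|}=\frac{c(m)\,\mu_{\mathrm{Th}}(B_\tau)}{c(m')\,\mu_{\mathrm{Th}}(B_\tau)}=\frac{c(m)}{c(m')}=K(m,m'). \]

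The substantial input here is the equidistribution statement, which we are content to cite; the remainder is bookkeeping. The step I expect to deserve the most care is checking that $B_\tau$ (and the hyperbolic ball) really are admissible test sets for the asymptotics --- properness of $\ell_\tau$ on the carried cone, and $\mu_{\mathrm{Th}}$-negligibility of the relevant boundaries --- together with the observation that replacing ``fully carried'' by ``carried'' perturbs the count only at lower order; I expect all of these to be routine.
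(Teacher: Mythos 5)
Your proof is correct, and it rests on the same ultimate input as the paper's --- the equidistribution of the multicurves of a fixed topological type, rescaled by $L$, towards a multiple $c(m)\,\mu_{\mathrm{Th}}$ of the Thurston measure --- but it is organised differently. The paper argues in two steps: first it extends $\ell_\tau$ to a homogeneous continuous positive function $f$ on all of $\ML(S)$ and applies the Erlandsson--Souto generalisation of Mirzakhani's theorem to $f$, obtaining the ratio for the \emph{unrestricted} count $|s_f(L,m)|/|s_f(L,m')|$; second, it invokes Masur's ergodicity of the mapping class group action to conclude that restricting both counts to the positive-measure set of laminations fully carried by $\tau$ does not change the limit. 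You instead apply the equidistribution statement directly to the test set $B_\tau = \{\lambda \in \overline{U} : \ell_\tau(\lambda) \le 1\}$, which forces you to check its admissibility (compactness via properness of $\ell_\tau$ on the pointed carried cone, $\mu_{\mathrm{Th}}$-negligibility of its boundary) and to dispose of the $O(L^{d-1})$ discrepancy between carried and fully carried points. What your route buys is that you never need to extend $\ell_\tau$ beyond the carried cone --- a step the paper asserts without detail and which is not entirely cosmetic, since $\ell_\tau$ is a priori only defined on $\overline{U}$ and a positive continuous homogeneous extension to all of $\ML(S)$ has to be constructed; what the paper's route buys is brevity, since once $f$ is in hand the restriction step is a one-line appeal to ergodicity rather than a Portmanteau argument on a specific compact set. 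Both are sound; your identification $K(m,m') = c(m)/c(m')$ via the hyperbolic unit ball correctly ties your constant back to Theorem~\ref{thrm:mirzakhani}.
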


\begin{proof}
First, note that the length function $\ell_\tau$ can be extended to a homogeneous, continuous function $f \from \ML(S) \to \RR_{>0}$.
By the generalisation of Theorem~\ref{thrm:mirzakhani} given by Erlandsson--Souto~\cite{ErlandssonSouto} we then have that
\[ \lim_{L \to \infty} \frac{|s_f(L, m)|}{|s_f(L, m')|} = K(m, m') \]
where $s_f(L, m) \defeq \{\textrm{multicurve $\calL$} : f(\calL) \leq L \inlineand [\calL] = m\}$~\cite[Page~44]{Wright}.

Second, recall that the mapping class group acts ergodically on $\ML(S)$~\cite[Theorem~2]{Masur}.
Therefore, since $\tau$ fully carries an open (and so positive measure) subset of $\ML(S)$, we have that
\[ \lim_{L \to \infty} \frac{|\{\calL \in s_f(L, m) : \textrm{$\calL$ is fully carried by $\tau$} \}|}{|\{\calL \in s_f(L, m') : \textrm{$\calL$ is fully carried by $\tau$} \}|}
= \lim_{L \to \infty} \frac{|s_f(L, m)|}{|s_f(L, m')|} \]
However, the left hand term is precisely the limit that we are interested in.
\end{proof}

Hence, in the case of $S_2$ we will observe the same distribution of topological types of multicurves by sampling from $s_{\tau_{35}}$

\subsection{Sampling from a train track}

Throughout this section, suppose that $\tau$ is a train track with branches labelled $1, \ldots, m$.
At each switch $s$, the branches that meet it are grouped into two gates.
Let $s^+$ and $s^-$ denote the set of branch labels of these each gate.
If $\calL$ is a multicurve that is fully carried by $\tau$ and assigns weights $x_1, \ldots, x_m$ to the branches of $\tau$ then for every switch $s$ these variables satisfy the switch condition:
\[ \sum_{i \in s^+} x_i = \sum_{i \in s^-} x_i. \]
Furthermore, since $\calL$ is fully carried, we have that $x_i \geq 1$ for every $i$.
Together these equations and inequalities define a polytope $P_\tau$ in $\RR^m$ with a natural bijection between the integral points within it and the multicurves in $s_\tau$.

Let $P_\tau(L)$ be the polytope obtained by intersecting $P_\tau$ with the halfspace $\{x_1 + \cdots + x_m \leq L\}$.
Again there is a natural bijection between integral points in $P_\tau(L)$ and multicurves in
\[ s_\tau(L) \defeq \{\calL \in s_\tau : \ell_\tau(\calL) \leq L\}. \]

\begin{algorithm}[htb!]
\caption{Get integral point}
\label{alg:get_integral_point}
\begin{algorithmic}
\Require{A compact polytope $P$ and integer $i$}
\Ensure{The $i$\nth{} integral point in $P$}
\State $D \gets$ the ambient dimension of $P$
\State $p \gets (0, 0, \ldots, 0) \in \ZZ^D$
\State $l_0 \gets -2^{||P||}, u_0 \gets 2^{||P||}$  \Comment{So that $l_0 \leq p[d] \leq u_0$ for every $d$.}
\For {$d \gets 1, \ldots, D$}
    \State $l \gets l_0, u \gets u_0$
    \While {$l < u - 1$}
        \State $g \gets \lfloor \frac{1}{2} (l + u) \rfloor$  \Comment{Use the midpoint for the next guess.}
        \If {$i < c(P \cap \{x_d < g\})$}  \Comment{Use Barvinok's algorithm.}
            \State $u \gets g$
            \State $P \gets P \cap \{x_d < g\}$
        \Else
            \State $l \gets g$
            \State $i \gets i - c(P \cap \{x_d < g\})$  \Comment{Use Barvinok's algorithm.}
            \State $P \gets P \cap \{x_d \geq g\}$
        \EndIf
    \EndWhile
    \State $p[d] \gets l$
\EndFor
\State \Return $p$
\end{algorithmic}
\end{algorithm}

Recall that Barvinok's algorithm can determine the number of integral points $|P|$ within a given polytope $P$~\cite{Barivnok}.
When the dimension of the polytope is fixed, the algorithm runs in polynomial time on the size of the input~\cite[Theorem~1.2]{Barivnok}.
Hence, when the underlying surface is fixed, $|s_\tau(L)| = |P_\tau(L)|$ can be computed in $\poly(||L||)$ time.

Furthermore, if we order the integral points $p_1, \ldots, p_k$ within a polytope $P$ lexicographically then Barvinok's algorithm also allows us to efficiently determine the coordinates of $p_i$.
To do this we tackle each coordinate of $p_i$ in turn.
We choose a hyperplane that splits $P$ in half and use Barvinok's algorithm to determine which side of this hyperplane $p_i$ lies on.
We intersect $P$ with the correct halfspace and repeat until we reach the polytope $\{p_i\}$ as described in Algorithm~\ref{alg:get_integral_point}.
By choosing these hyperplanes to bisect $P$ we can ensure that no more than $\dim_{\textrm{amb}}(P) \cdot ||P||$ rounds are needed.
In the case of $P_\tau(L)$, no more than $-6 \chi(S) \cdot ||L||$ rounds are needed.
This algorithm was implemented as part of SageMath 8.2~\cite{sagemath} and is where the majority of the processing time is spent.

Therefore by choosing an index $i$ uniformly at random from $[1, |s_\tau(L)|]$ and using Algorithm~\ref{alg:get_integral_point} we can efficiently obtain a multicurve chosen uniformly at random from $s_\tau(L)$.
See Algorithm~\ref{alg:random_multicurve}.

\begin{algorithm}[ht]
\caption{Random multicurves}
\label{alg:random_multicurve}
\begin{algorithmic}
\Require{A train track $\tau$ and integer $L$}
\Ensure{A multicurve chosen uniformly at random from $s_\tau(L)$}
    \State $i \gets$ an integer chosen uniformly at random from $[1, |s_\tau(L)|]$
    \State $p \gets$ the $i$\nth{} integral point of $P_\tau(L)$  \Comment{Use Algorithm~\ref{alg:get_integral_point}.}
    \State $\calL \gets$ the multicurve in $s_\tau(L)$ corresponding to $p$
    \State \Return $\calL$
\end{algorithmic}
\end{algorithm}

\section{Determining properties of multicurves}
\label{sec:properties}

In this section we will describe how to efficiently determine properties, including topological type, of a multicurve.
We will begin by showing how to do this on a punctured surface $S$ before showing how to extend this process to closed surfaces.

\begin{definition}
Suppose that $\calL$ is a multicurve on a punctured surface $S$.
An ideal triangulation $\calT$ is \emph{$\calL$--short} if for each component $\gamma$ of $\calL$ there is an edge $e$ of $\calT$ that is \emph{parallel} to it.
That is, $\gamma$ is a component of $\partial N(e)$.
For example, see Figure~\ref{fig:parallel}.
\end{definition}

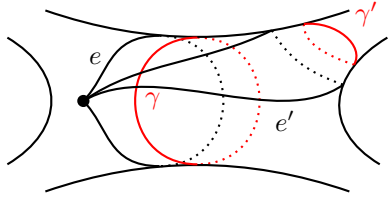
\begin{figure}[htb]
\centering
\begin{tikzpicture}[xscale=2, yscale=1.2, thick]

\draw [dotted] (-0.25, 0.72) to[out=0, in=0] (-0.25, -0.72);
\draw [dotted, red] (0, 0.7) to[out=0, in=0] (0, -0.7);

\draw [dotted] (0.49, 0.78) to [out=290, in=150] (0.96, 0.18);
\draw [dotted, red] (0.7, 0.85) to[out=270, in=150] (1.03, 0.4);

\draw (-1, 1) to [out=-30,in=210] (1, 1);
\draw (-1, -1) to [out=30,in=150] (1, -1);

\draw (-1.25, 0.7) to [out=-45,in=45] (-1.25, -0.7);
\draw (1.25, 0.7) to [out=225,in=135] (1.2, -0.7);

\draw [red] (0, 0.7) to[out=180, in=180] node [right] {$\gamma$} (0, -0.7);
\draw (-0.75, 0) to[out=60,in=180] node [left] {$e$} (-0.25, 0.72);
\draw (-0.75, 0) to[out=300,in=180] (-0.25, -0.72);

\draw [red] (0.7, 0.85) to[out=10, in=70] node [above right] {$\gamma'$} (1.03, 0.4);
\draw (-0.75, 0) to [out=50, in=220] (0.49, 0.78);
\draw (-0.75, 0) to [out=45, in=230] node [near end, below] {$e'$} (0.96, 0.18);

\node [dot] at (-0.75, 0) {};

\end{tikzpicture}
\caption{Edges $e$ and $e'$ parallel to components $\gamma$ and $\gamma'$ of a multicurve.}
\label{fig:parallel}
\end{figure}

It is very easy to compute many properties of $\calL$ when it is given via its intersection numbers with the edges of a $\calL$--short triangulation.
For example, how many components it has, how many of its components lie in the same isotopy class or its geometric intersection number with another multicurve $\calL'$~\cite{curver}~\cite{BellSimplifying}.
Fortunately, it is also straightforward to move from any triangulation to a short one.

\begin{theorem}[{\cite[Section~4]{AgolHassThurston},~\cite[Theorem~3.7]{BellSimplifying},~\cite[Section~6.3]{EricksonNayyeri}}]
\label{thrm:shorten}
Suppose that $\calL$ is a multicurve on $\calT$.
There is an algorithm that computes a sequence of moves (flips and twists) from $\calT$ to a $\calL$--short triangulation that runs in $\poly(||\calT(\calL)||)$ time. \qed
\end{theorem}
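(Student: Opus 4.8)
The plan is to drive down the complexity of $\calL$ --- recorded by the vector $w = \calT(\calL)$ of geometric intersection numbers of $\calL$ with the edges of the current triangulation --- by a sequence of flips and twists, batching together the long runs of flips produced by twisting regions so that the total number of moves stays polynomial in $||\calT(\calL)||$. I would begin by recording how each move transforms the coordinates: a flip of an edge $e$ lying in the square with boundary weights $a, b, c, d$ replaces $w_e$ by a new weight of the form $\max(a + c,\, b + d) - w_e$ (the precise piecewise-linear formula depends on the local corner types, but is always assembled from $+$, $\max$ and $\min$) and leaves the other entries of $w$ fixed; a twist --- a power of a Dehn twist about a curve currently isotopic to an edge of $\calT$ --- changes $w$ by an explicit piecewise-integral-linear map. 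Hence a single move increases the complexity of the intersection vector by at most an additive constant depending only on $S$, so after $\poly(||\calT(\calL)||)$ moves every intermediate complexity remains within a polynomial factor of $||\calT(\calL)||$ and any one move can be performed in $\poly(||\calT(\calL)||)$ time; it therefore suffices to bound the number of moves.

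The core step is then the following. I would show that whenever $\calL$ is not yet short on $\calT$ one can find, in $\poly(||\calT(\calL)||)$ time, either \textbf{(a)} a flip that strictly decreases the total weight $|w| \defeq \sum_e w_e$, or \textbf{(b)} an edge $e$ and an explicit positive integer $k$ for which applying the $k$\nth{} power of the associated Dehn twist decreases $|w|$ by at least a fixed fraction of $w_e$. Here $k$ is read off from the local weights as a ratio --- essentially how many times $\calL$ wraps through an annular sub-track before its combinatorics change --- so although $k$ may be exponentially large it is obtained by a single integer division, and the twist is a single update of $w$; this is the ``effective'' twist-detection at the heart of~\cite[Section~4]{AgolHassThurston} and~\cite[Theorem~3.7]{BellSimplifying}. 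Iterating, I would argue by a potential function --- $|w|$, refined lexicographically by a secondary count of the ``heavy'' edges --- that only $\poly(||\calT(\calL)||)$ such macro-moves occur before no flip decreases $|w|$ any further, the point being that each type-(b) step zeroes out a twisting parameter that no later move can recreate, so no annular region is untwisted twice.

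Next I would verify that a triangulation on which no flip decreases $|w|$ is $\calL$--short, by a local analysis of the normal arcs inside each triangle (possibly after a bounded number of further flips to separate components that momentarily run along the same edge): in this position each component of $\calL$ meets some edge in a single point and bounds a regular neighbourhood of that edge, i.e.\ is parallel to it. Combining the three ingredients --- bounded-cost moves, $\poly$-many macro-moves, and shortness at the minimum --- yields the algorithm of the statement together with its polynomial running time.

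The step I expect to be the main obstacle is the amortized analysis in the second part: proving the number of macro-moves is genuinely polynomial and not merely finite. The danger is a cascade of nested annular regions that must be untwisted one after another, where one has to verify simultaneously that each untwisting makes definite progress in the potential and that it cannot be undone by a subsequent flip. This is exactly the delicate accounting carried out in~\cite[Section~4]{AgolHassThurston},~\cite[Theorem~3.7]{BellSimplifying} and~\cite[Section~6.3]{EricksonNayyeri}, and rather than reinvent it I would follow their potential function and charging scheme.
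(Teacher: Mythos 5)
The paper does not actually prove this theorem: it is quoted directly from the cited sources (Agol--Hass--Thurston, Bell's \emph{Simplifying}, and Erickson--Nayyeri), which is why the statement already carries its end-of-proof mark, and the only addition the paper makes is the remark that the algorithm is implemented in \texttt{curver}. Your outline is a faithful high-level reconstruction of exactly the strategy of those references --- piecewise-linear coordinate updates for flips, accelerated power-of-Dehn-twist moves detected by a single integer division, and a potential-function bound on the number of macro-moves --- and you correctly identify the amortized accounting as the delicate part to be imported from those papers rather than re-derived, so your proposal matches the (cited) proof in approach.
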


This algorithm has been implemented as part of \texttt{curver}~\cite{curver}.

\begin{remark}
If a property occurs for a definite percentage of multicurves and it can be efficiently computed by this approach then we can also efficiently condition our distribution of multicurves on it.
Such properties include:
\begin{itemize}
\item having a specific number of components (of which being connected is a special case) and
\item each of its components lying in a distinct isotopy class.
\end{itemize}
\end{remark}

\subsection{Topological Type}
\label{sub:topological_type}

We now describe how to compute and record the topological type of a multicurve $\calL$ via a labelled graph.
By Theorem~\ref{thrm:shorten}, without loss of generality we may assume throughout that $\calL$ is given on an $\calL$--short triangulation.

\begin{figure}[htb]
\centering
\begin{tikzpicture}[scale=1.25,thick]

\begin{scope}[shift={(-2.5,0)}]
\draw [red, dotted] (-1,-0.15) to [out=180,in=180] (-1,-0.5);
\draw [red, dotted] (0,-0.4) to [out=180,in=180] (0,0.4);
\draw [red, dotted] (0.1,-0.4) to [out=180,in=180] (0.1,0.4);
\draw
	(0,0.5-0.1) to [out=0,in=180]
	(1,0.5) to [out=0,in=90]
	(2,0) to [out=270,in=0]
	(1,-0.5) to [out=180,in=0]
	(0,-0.5+0.1) to [out=180,in=0]
	(-1,-0.5) to [out=180,in=270]
	(-2,0) to [out=90,in=180]
	(-1,0.5) to [out=0,in=180]
	(0,0.5-0.1);
\draw (-1.5,0) to [out=-30,in=180+30] (-0.5,0);
\draw (-1.3,-0.1) to [out=30,in=180-30] (-0.7,-0.1);
\draw (0.5,0) to [out=-30,in=180+30] (1.5,0);
\draw (0.7,-0.1) to [out=30,in=180-30] (1.3,-0.1);
\draw [red] (-1,-0.15) to [out=0,in=0] (-1,-0.5);
\draw [red] (0,-0.4) to [out=0,in=0] node [left] {$\calL$} (0,0.4);
\draw [red] (0.1,-0.4) to [out=0,in=0] (0.1,0.4);
\node [dot] at (1.75,0) {};
\node [dot] at (-1.75,0.1) {};
\node [dot] at (-1.75,-0.1) {};
\node at (0,-0.75) {$S$};
\end{scope}

\begin{scope}[shift={(2.5,0)}]
\draw
	(-1.5,0) to [out=-30,in=90]
	(-1.1,-0.35) to [out=270,in=0]
	(-1.25,-0.5) to [out=180,in=270]
	(-2,0) to [out=90,in=180]
	(-1,0.5) to [out=0,in=90]
	(0, 0) to [out=270,in=0]
	(-0.75,-0.5) to [out=180,in=270]
	(-0.9,-0.35) to [out=90,in=210]
	(-0.5,0);

\draw
	(0.5,0) to [out=90,in=180]
	(1.5,0.5) to [out=0,in=90]
	(2.5,0) to [out=270,in=0]
	(1.5,-0.5) to [out=180,in=270]
	(0.5,0);

\draw (-1.3,-0.1) to [out=30,in=180-30] (-0.7,-0.1);
\draw (1,0) to [out=-30,in=180+30] (2,0);
\draw (1.2,-0.1) to [out=30,in=180-30] (1.8,-0.1);
\node [dot] at (-0.25,0) {};
\node [dot] at (0.75,0) {};
\node [dot] at (2.25,0) {};
\node [dot] at (-1.25,-0.3) {};
\node [dot] at (-0.75,-0.3) {};
\node [dot] at (-1.75,0.1) {};
\node [dot] at (-1.75,-0.1) {};
\node at (0.25,-0.75) {$S'$};
\end{scope}

\draw [->] (-0.35,0) -- node [above] {Crush} (0.35,0);

\end{tikzpicture}
\caption{Crushing along a multicurve $\calL$.}
\label{fig:crush}
\end{figure}
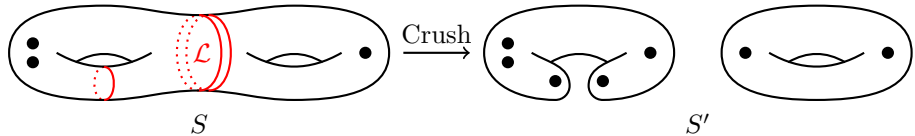

Let $S'$ be the surface obtained by \emph{crushing} $S$ along $\calL$.
For example, see Figure~\ref{fig:crush}.
Since $\calL$ is given on an $\calL$--short triangulation $\calT$, it is straightforward to compute an ideal triangulation $\calT'$ of $S'$.
For each component $\gamma$ of $\calL$, we \emph{extract} a triangle meeting $\gamma$ and an edge $e$ that is parallel to $\gamma$, as shown in Figure~\ref{fig:crush_triangulation}

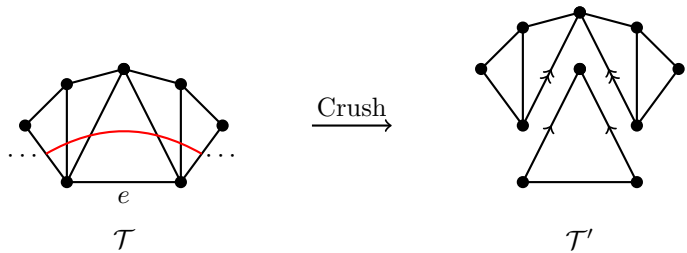
\begin{figure}[htb]
\centering
\begin{tikzpicture}[scale=1.5,thick]

\begin{scope}[shift={(-2,-0.5)}]
\draw (-0.5, 0) node [dot] {} to node [below] {$e$} (0.5, 0) node [dot] {};

\node at (165:0.9) {$\cdots$};

\draw (-0.5, 0) to (150:1) node [dot] {};
\draw (-0.5, 0) to (120:1) node [dot] {};
\draw (-0.5, 0) to (90:1) node [dot] {};

\draw (0.5, 0) to (90:1) node [dot] {};
\draw (0.5, 0) to (60:1) node [dot] {};
\draw (0.5, 0) to (30:1) node [dot] {};

\node at (15:0.9) {$\cdots$};

\draw (150:1) -- (120:1) -- (90:1) -- (60:1) -- (30:1);

\draw [red] ($(-0.5,0)!0.5!(150:1)$) to [out=30,in=150] ($(0.5,0)!0.5!(30:1)$);
\node at (0, -0.5) {$\calT$};

\end{scope}

\begin{scope}[shift={(2,0)}]
\draw (-0.5, -0.5) node [dot] {} to (0.5, -0.5) node [dot] {};
\draw[decoration={markings,mark=at position 0.5 with {\arrow{>}}}, postaction={decorate}] (-0.5, -0.5) to (0, 0.5) node [dot] {};
\draw[decoration={markings,mark=at position 0.5 with {\arrow{>}}}, postaction={decorate}] (0.5, -0.5) to (0, 0.5) node [dot] {};

\draw (-0.5, 0) node [dot] {} to (150:1) node [dot] {};
\draw (-0.5, 0) to (120:1) node [dot] {};
\draw[decoration={markings,mark=at position 0.5 with {\arrow{>>}}}, postaction={decorate}] (-0.5, 0) to (90:1) node [dot] {};

\draw[decoration={markings,mark=at position 0.5 with {\arrow{>>}}}, postaction={decorate}] (0.5, 0) to (90:1) node [dot] {};
\draw (0.5, 0) node [dot] {} to (60:1) node [dot] {};
\draw (0.5, 0) to (30:1) node [dot] {};

\draw (150:1) -- (120:1) -- (90:1) -- (60:1) -- (30:1);

\node at (0, -1) {$\calT'$};
\end{scope}

\draw [->] (-0.35,0) -- node [above] {Crush} (0.35,0);

\end{tikzpicture}
\caption{Crushing a triangulation along a component of a multicurve.}
\label{fig:crush_triangulation}
\end{figure}

For ease of notation let $\{S'_i\}$ be the connected components of $S'$ and let $g_i$ denote the genus of $S'_i$.
With $\calT'$ in hand, $g_i$ can be computed directly from the number of vertices and edges in each connected component.
When $\calL$ is a curve, $S'$ consists of either one or two components and their topological type determines the topological type of $\calL$.
However when $\calL$ is a multicurve we are required to record additional information.

\begin{definition}
The \emph{partition graph} of $\calL$ is the labelled graph $G(\calL)$ with:
\begin{itemize}
\item a vertex corresponding to each $S'_i$ that is labelled $g_i$ and
\item a half-edge emanating from each vertex for each puncture of the corresponding surface $S'_i$.
These are labelled with the multiplicity of the corresponding component in the original surface.
Two half-edges are connected if and only if their punctures came from crushing along the same component.
\end{itemize}
We obtain its \emph{completion} $\hat{G}(\calL)$ by adding a dummy vertex labelled $-1$ and connecting all hanging half-edges to it.
For example, see Figure~\ref{fig:partition_graph}.
\end{definition}

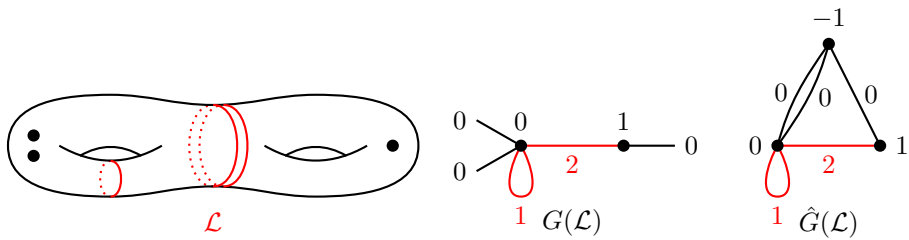
\begin{figure}[htb]
\centering
\begin{tikzpicture}[scale=1.35,thick]

\begin{scope}[shift={(-1.5,0)}]
\draw [red, dotted] (-1,-0.15) to [out=180,in=180] (-1,-0.5);
\draw [red, dotted] (0,-0.4) to [out=180,in=180] (0,0.4);
\draw [red, dotted] (0.1,-0.4) to [out=180,in=180] (0.1,0.4);
\draw
	(0,0.5-0.1) to [out=0,in=180]
	(1,0.5) to [out=0,in=90]
	(2,0) to [out=270,in=0]
	(1,-0.5) to [out=180,in=0]
	(0,-0.5+0.1) to [out=180,in=0]
	(-1,-0.5) to [out=180,in=270]
	(-2,0) to [out=90,in=180]
	(-1,0.5) to [out=0,in=180]
	(0,0.5-0.1);
\draw (-1.5,0) to [out=-30,in=180+30] (-0.5,0);
\draw (-1.3,-0.1) to [out=30,in=180-30] (-0.7,-0.1);
\draw (0.5,0) to [out=-30,in=180+30] (1.5,0);
\draw (0.7,-0.1) to [out=30,in=180-30] (1.3,-0.1);
\draw [red] (-1,-0.15) to [out=0,in=0] (-1,-0.5);
\draw [red] (0,-0.4) to [out=0,in=0] (0,0.4);
\draw [red] (0.1,-0.4) to [out=0,in=0] (0.1,0.4);

\node [dot] at (1.75,0) {};
\node [dot] at (-1.75,0.1) {};
\node [dot] at (-1.75,-0.1) {};

\node [red] at (0,-0.75) {$\calL$};
\end{scope}

\begin{scope}[shift={(1.5,0)}]

\draw [red] (0,0) -- node [below] {$2$} (1,0);
\draw [red] (0,0) to [out=-60,in=0] (0,-0.5) node [below] {$1$} to [out=180,in=-120] (0,0);
\draw (0,0) -- (150:0.5) node [left] {$0$};
\draw (0,0) -- (210:0.5) node [left] {$0$};
\draw (1,0) -- (1.5,0) node [right] {$0$};

\node [dot, label={above:$0$}] at (0,0) {};
\node [dot, label={above:$1$}] at (1,0) {};

\node at (0.5,-0.75) {$G(\calL)$};

\end{scope}

\begin{scope}[shift={(4,0)}]
\draw [red] (0,0) -- node [below] {$2$} (1,0);
\draw [red] (0,0) to [out=-60,in=0] (0,-0.5) node [below] {$1$} to [out=180,in=-120] (0,0);
\draw (0,0) to [out=75,in=235] node [left] {$0$} (0.5,1);
\draw (0,0) to [out=55,in=255] node [right] {$0$} (0.5,1);
\draw (1,0) -- node [right] {$0$} (0.5,1);

\node [dot, label={left:$0$}] at (0,0) {};
\node [dot, label={right:$1$}] at (1,0) {};
\node [dot, label={above:$-1$}] at (0.5,1) {};

\node at (0.5,-0.75) {$\hat{G}(\calL)$};
\end{scope}

\end{tikzpicture}
\caption{A multicurve, its partition graph and its completion.}
\label{fig:partition_graph}
\end{figure}

\begin{proposition}
The following are equivalent:
\begin{itemize}
\item multicurves $\calL$ and $\calL'$ have the same topological type,
\item $G(\calL)$ and $G(\calL')$ are (label) isomorphic graphs, and
\item $\hat{G}(\calL)$ and $\hat{G}(\calL')$ are (label) isomorphic graphs. \qed
\end{itemize}
\end{proposition}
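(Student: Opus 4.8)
The plan is to prove the equivalence in two stages: first that $G(\calL) \cong G(\calL')$ (label isomorphism) if and only if $\hat G(\calL) \cong \hat G(\calL')$, which is formal, and then that $G(\calL) \cong G(\calL')$ if and only if $\calL$ and $\calL'$ have the same topological type, which carries the content.

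For the formal stage, note that $\hat G(\calL)$ is built from $G(\calL)$ by adjoining a single new vertex, labelled $-1$, and replacing every hanging half-edge by an ordinary edge from its endpoint to this vertex, keeping the label. Since every other vertex carries a genus label $g_i \geq 0$, the dummy vertex is the \emph{unique} vertex labelled $-1$, so any label isomorphism $\hat G(\calL) \to \hat G(\calL')$ fixes the dummy vertices and therefore restricts to a label isomorphism of the rest, i.e.\ (after reinterpreting the edges at the dummy vertices as hanging half-edges) to a label isomorphism $G(\calL) \to G(\calL')$; conversely a label isomorphism of partition graphs matches hanging half-edges to hanging half-edges with the same labels and so extends uniquely over the dummy vertices. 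Thus $G(\calL) \cong G(\calL') \iff \hat G(\calL) \cong \hat G(\calL')$.

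For the second stage, one implication is immediate: crushing is natural with respect to homeomorphisms, so a mapping class $\phi$ with $\phi(\calL) = \calL'$ induces a homeomorphism between the surfaces $S$ crushed along $\calL$ and $S$ crushed along $\calL'$ which matches components to components, preserves their genera, sends the pair of punctures produced by a component $\gamma$ of $\calL$ to the pair produced by $\phi(\gamma)$, and preserves multiplicities; reading this homeomorphism off on partition graphs gives the desired label isomorphism. For the converse I would invoke the change of coordinates principle for multicurves. The point is that a label isomorphism $G(\calL) \to G(\calL')$ is exactly the same data as a homeomorphism between the surface obtained by cutting $S$ along $\calL$ and that obtained by cutting $S$ along $\calL'$ which respects the identifications defining $\calL$ and $\calL'$ and matches component multiplicities: a connected orientable punctured surface is determined up to homeomorphism by its genus and its number of punctures, and $G(\calL)$ records precisely the genus of each piece of the cut surface, which pairs of boundary circles are reglued to form each component of $\calL$, and the multiplicity of that component. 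The change of coordinates principle then assembles such a homeomorphism of cut surfaces into a homeomorphism $h$ of $S$ with $h(N(\calL)) = N(\calL')$, hence $h(\calL) = \calL'$ and $[\calL] = [\calL']$.

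The main obstacle is showing that $G(\calL)$ is a \emph{complete} invariant of the cut surface-with-identifications — that nothing survives the regluing beyond the pairing of boundary circles and the multiplicities, so that the twist and orientation choices in the gluings do not split a single label-isomorphism class into several $\Mod(S)$-orbits, and that the permutation freedom on the punctures of each cut piece (coming from the surjection of its mapping class group onto the symmetric group on those punctures) is enough to make the chosen piecewise homeomorphisms agree along the curves being reglued. These are exactly the contents of the change of coordinates principle, so the task reduces to checking the translation into graph-theoretic language; a remaining, routine point is to confirm that the two half-edges of each edge of $G(\calL)$ carry the common multiplicity of the component they record and that the extracted triangles and edges used to produce the triangulation $\calT'$ do not affect $G(\calL)$ (clear, since $G$ only sees genera and puncture-multiplicity data), so that ``matching labels'' in the graph genuinely corresponds to ``$h$ carries $\calL$ to $\calL'$ with the right weights.''
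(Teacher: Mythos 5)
Your proposal is correct: the paper states this proposition without proof (hence the bare \qed), and the argument it implicitly relies on is exactly the one you give --- the dummy vertex is the unique vertex labelled $-1$, so the two graph conditions are formally equivalent, and the equivalence with topological type is the change of coordinates principle for multicurves, since $G(\calL)$ records precisely the genera and puncture counts of the cut pieces, the boundary pairings, and the component multiplicities. The caveats you flag (realising the prescribed puncture bijection on each piece, and the irrelevance of twisting in the regluing) are the right ones and are handled by the standard form of that principle.
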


Although there are significantly more efficient methods for solving the graph isomorphism problem, since the graphs we will encounter are so small, we choose to simply write each of them down in a canonical form.

Now note that if we choose an ordering for its vertices then we may describe a labelled graph via a pair consisting of:
\begin{enumerate}
\item the list of vertex labels (in order),
\item a matrix where the $i,j$\nth{} entry is the multiset of edge labels that connect from vertex $i$ to vertex $j$
\end{enumerate}
We use the ordering of the vertices that produces the lexicographically smallest pair and refer to this pair as the \emph{canonical pair}.
Using this, two graphs are label isomorphic if and only if their canonical pairs are equal.
Finally, for convenience, since the matrix is symmetric, we omit the entries below the diagonal and flatten it into a list of multisets.
We refer to this as the \emph{canonical name} of the graph.
For example, the canonical pair for the completed partition graph $\hat{G}(\calL)$ shown in Figure~\ref{fig:partition_graph} is:
\[ ([-1, 0, 1], \left(\begin{matrix}
    \{\} & \{0, 0\} & \{0\} \\
    \{0, 0\} & \{1\} & \{2\} \\
    \{0\} & \{2\} & \{\}
\end{matrix}\right)) \]
and so its canonical name is:
\[ ([-1, 0, 1], [\{\}, \{0, 0\}, \{0\}, \{1\}, \{2\}, \{\}]). \]

\subsection{Closed surfaces}
\label{sub:closed}

If $\calL$ is a multicurve on a closed surface $S$ then we may introduce an artificial puncture and consider an inclusion $\mathring{\calL}$ of $\calL$ onto the punctured surface $\mathring{S}$.
By removing the dummy vertex of $\hat{G}(\mathring{\calL})$, or equivalently by dropping all of the hanging half-edges of $G(\mathring{\calL})$, we obtain a graph $G(\calL)$ encoding the topological type of $\calL$ on $S$.
Again we describe such a graph by its canonical name.
For example, see Figure~\ref{fig:closed_partition_graph} whose canonical name is:
\[ ([0, 1], [\{1\}, \{2\}, \{\}]). \]

\begin{figure}[htb]
\centering
\begin{tikzpicture}[scale=1.35,thick]

\begin{scope}[shift={(-1.5,0)}]
\draw [red, dotted] (-1,-0.15) to [out=180,in=180] (-1,-0.5);
\draw [red, dotted] (0,-0.4) to [out=180,in=180] (0,0.4);
\draw [red, dotted] (0.1,-0.4) to [out=180,in=180] (0.1,0.4);
\draw
	(0,0.5-0.1) to [out=0,in=180]
	(1,0.5) to [out=0,in=90]
	(2,0) to [out=270,in=0]
	(1,-0.5) to [out=180,in=0]
	(0,-0.5+0.1) to [out=180,in=0]
	(-1,-0.5) to [out=180,in=270]
	(-2,0) to [out=90,in=180]
	(-1,0.5) to [out=0,in=180]
	(0,0.5-0.1);
\draw (-1.5,0) to [out=-30,in=180+30] (-0.5,0);
\draw (-1.3,-0.1) to [out=30,in=180-30] (-0.7,-0.1);
\draw (0.5,0) to [out=-30,in=180+30] (1.5,0);
\draw (0.7,-0.1) to [out=30,in=180-30] (1.3,-0.1);
\draw [red] (-1,-0.15) to [out=0,in=0] (-1,-0.5);
\draw [red] (0,-0.4) to [out=0,in=0] (0,0.4);
\draw [red] (0.1,-0.4) to [out=0,in=0] (0.1,0.4);

\node [red] at (0,-0.75) {$\calL$};
\end{scope}

\begin{scope}[shift={(1.5,0)}]

\draw [red] (0,0) -- node [below] {$2$} (1,0);
\draw [red] (0,0) to [out=-60,in=0] (0,-0.5) node [below] {$1$} to [out=180,in=-120] (0,0);

\node [dot, label={above:$0$}] at (0,0) {};
\node [dot, label={above:$1$}] at (1,0) {};

\node at (0.5,-1.) {$G(\calL) = \hat{G}(\calL)$};

\end{scope}

\end{tikzpicture}
\caption{A multicurve and (the completion of) its partition graph.}
\label{fig:closed_partition_graph}
\end{figure}
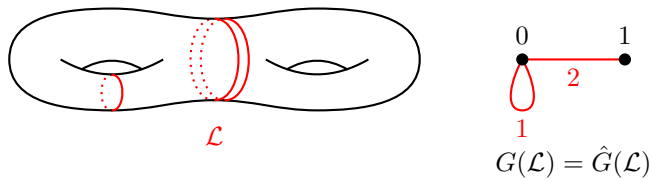

Again this extra step has been implemented within \texttt{curver}.

\begin{remark}
A similar technique can be used to record the topological type of any integral lamination, including ones in which some or all of the components are arcs.
We say that an edge $e$ is parallel to an arc $\gamma$ if they are equal.
\end{remark}

\section{Other work}

The processes described within the paper are completely general.
They can be used to obtain experimental evidence for $K(m, m')$ for any pair of topological types of multicurves.

\subsection{Six-times punctured sphere}

We have also used this approach to obtain exploratory data about the distribution of topological types on the six-time-punctured sphere $S_{0, 6}$.
In this case there are two topological types of curve and representatives $\gamma_{3,3}$ and $\gamma_{2,4}$ (named for how they separate the punctures of $S_{0,6}$) are shown in Figure~\ref{fig:S_0_6}.
These have canonical names
\[ ([-1, 0, 0], [\{\}, \{0,0,0\}, \{0,0,0\}, \{\}, \{1\}, \{\}]) \]
and
\[ ([-1, 0, 0], [\{\}, \{0,0\}, \{0,0,0,0\}, \{\}, \{1\}, \{\}]) \]
respectively.

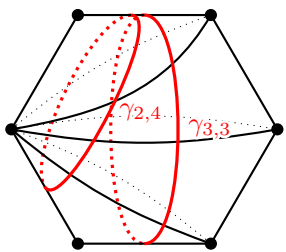
\begin{figure}[htb]
\centering



\begin{tikzpicture}[scale=1.75, thick]
\begin{scope}[xshift=-35]
\draw [red, dotted, very thick] ($(60:1)!0.5!(120:1)$) to [out=180, in=180, looseness=0.5] ($(240:1)!0.5!(300:1)$);
\draw [red, dotted, very thick] (-0.1, 0.865) to [out=180, in=120, looseness=0.5] ($(180:1)!0.5!(240:1)$);

\draw [dotted, thin] (180:1) to [out=40, in=200] (60:1);
\draw [dotted, thin] (180:1) to [out=10, in=170] (0:1);
\draw [dotted, thin] (180:1) to [out=-20, in=140] (300:1);

\foreach \t in {0, 60, ..., 300}
    \draw (\t:1) to (\t+60:1) node [dot] {};

\draw (180:1) to [out=10, in=240] (60:1);
\draw (180:1) to [out=-10, in=190] (0:1);
\draw (180:1) to [out=-40, in=160] (300:1);
\draw [red, very thick] ($(60:1)!0.5!(120:1)$) to [out=0, in=0, looseness=0.5] node [right] {\contour*{white}{$\gamma_{3,3}$}} ($(240:1)!0.5!(300:1)$);
    \draw [red, very thick] (-0.1, 0.865) to [out=0, in=300, looseness=0.5] node [right] {\contour*{white}{$\gamma_{2,4}$}} ($(180:1)!0.5!(240:1)$);
\end{scope}

\end{tikzpicture}
\caption{Curves on $S_{0, 6}$.}
\label{fig:S_0_6}
\end{figure}

A similar experiment involving \num{61655} samples was performed in this case.
Of these \num{35115} had type $[\gamma_{3,3}]$ while \num{26540} had type $[\gamma_{2,4}]$.
Suggesting that on $S_{0, 6}$
\[ K([\gamma_{3,3}], [\gamma_{2,4}]) = \frac{4}{3}. \]

\subsection{Twice-punctured torus}

Similarly, in the case of the twice-punctured torus $S_{1, 2}$ any curve is either separating or non-separating.
These have canonical names
\[ ([-1, 0, 1], [\{\}, \{0,0\}, \{\}, \{\}, \{1\}, \{\}]) \inlineand ([-1, 0], [\{\}, \{0,0\}, \{1\}]) \]
respectively.

\begin{figure}[htb]
\centering
\begin{tikzpicture}[scale=1.5,thick]

\draw [red, dotted] (-1,-0.15) to [out=180,in=180] (-1,-0.5);
\draw
	(0,0) to [out=270,in=0]
	(-1,-0.5) to [out=180,in=270]
	(-2,0) to [out=90,in=180]
	(-1,0.5) to [out=0,in=90]
	(0,0);
\draw (-1.5,0) to [out=-30,in=180+30] (-0.5,0);
\draw (-1.3,-0.1) to [out=30,in=180-30] (-0.7,-0.1);
\draw [red] (-1,-0.15) to [out=0,in=0] (-1,-0.5);
\draw [red] (-0.3,-0.3) to [out=0,in=0] (-0.3,0.3) to [out=180,in=180] (-0.3,-0.3);

\node [dot] at (-0.3, 0.2) {};
\node [dot] at (-0.3, -0.2) {};

\end{tikzpicture}
\caption{Curves on $S_{1, 2}$.}
\label{fig:S_1_2}
\end{figure}
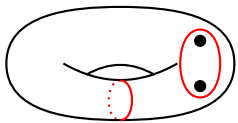

A similar experiment involving \num{118140} samples was performed in this case.
Of these \num{113412} were separating while \num{4728} were non-separating.
Suggesting that on $S_{1, 2}$
\[ K(\textrm{non-separating}, \textrm{separating}) = 24 \]

\appendix

\clearpage
\section{Curver}
\label{sec:curver}

The following Python script uses \texttt{curver} to sample multicurves from $\tau_{35}$ and log their topological type.
Note the use of \texttt{cddlib} is only for performance.

\lstinputlisting{polytope.py}

The same can be achieved without SageMath via the docker command:

\begin{lstlisting}
docker run --rm -t markcbell/mirzakhani
\end{lstlisting}

\clearpage
\bibliographystyle{amsplain}
\bibliography{bibliography}

\end{document}